\definecolor{webgreen}{rgb}{0,.5,0}
\definecolor{webbrown}{rgb}{.6,0,0}
\theoremstyle{plain}
\newtheorem{theorem}{Theorem}
\newtheorem{lemma}[theorem]{Lemma}
\newtheorem{corollary}[theorem]{Corollary}
\newtheorem{prop}[theorem]{Proposition}
\theoremstyle{definition}
\begin{document}

\title[Triangulations with few ears]{Triangulations with few ears:  \\ symmetry classes and disjointness}

\author[Andrei Asinowski]{Andrei Asinowski}
\address{%
Institut f\"ur Informatik, \\ Freie Universit\"at Berlin,\\
Takustra\ss e 9, 14195, \\ Berlin, Germany.}
\email{asinowski@mi.fu-berlin.de}
\thanks{Research of the first  author is supported by the ESF EUROCORES programme EuroGIGA, CRP `ComPoSe',
Deutsche Forschungsgemeinschaft (DFG), grant FE 340/9-1.}
\author[Alon Regev]{Alon Regev}
\address{Department of Mathematical Sciences, \\
Northern Illinois University, \\
DeKalb, IL.}
\email{regev@math.niu.edu}
\subjclass{05A15}

\keywords{Triangulations of polygons, symmetry classes, disjoint triangulations}

\date{today}

\begin{abstract}
An ear in a triangulation $T$ of a convex $n$-gon $P$
is a triangle of $T$ that shares two sides with $P$ itself.
Certain enumerational and structural problems become easier when one considers only
triangulations with few ears.
We demonstrate this in two ways.
First, for $k=2, 3$, we find the number of symmetry classes of triangulations with $k$ ears.
Second, for $k=2, 3$, we determine the number of triangulations disjoint from a given triangulation:
this number depends only on $n$ for $k=2$,
and only on lengths of branches of the dual tree for $k=3$.
%
%
%
\end{abstract}

\maketitle

\section{Introduction}
Let $P$ be a convex $n$-gon ($n \geq 4$)
with vertices labelled $0, 1, 2, \dots, n-1$,
and let $T$ be its triangulation.
An \textit{ear} is a triangle of $T$ that shares two sides with $P$.
An \textit{internal triangle} is a triangle of $T$ that shares no side with $P$.
The number of ears of any triangulation $P$ is two more than the number of its internal triangles.
The triangulations in Figure~\ref{hexs}(a, b) have $2$ ears,
and the triangulation in Figure~\ref{hexs}(c) has $3$ ears.
More examples of $2$-eared triangulations are given in Figures~\ref{conj-Fig}, \ref{subsets-Fig} and~\ref{Fig1}.

Recall that the number of triangulations of $P$ is $C_{n-2}$,
where $C_n = \frac{1}{n+1} \binom{2n}{n}$ is the $n$th Catalan number.
Hurtado and Noy~\cite{HN} found that the number of triangulations of an $n$-gon with exactly $k$ ears is
\[
\frac{n}{k} 2^{n-2k} \binom{n-4}{2k-4} C_{k-2}
\]
(see also~\cite{C}).
Triangulations having few ears
(equivalently, with few internal triangles)
are generally easier to understand
than triangulations with an arbitrary number of ears.
Adin, Firer and Roichman \cite{AFR} showed that the affine Weyl group $\tilde{C}_n$ acts transitively,
by ``colored'' flips, on the set of $2$-eared triangulations.

The purpose of this note is to study two aspects of $2$-eared and $3$-eared triangulations:
we enumerate their symmetry classes, and we compute the number of triangulations which share no diagonals with a given $2$-eared or $3$-eared triangulation.
Extension of these results to general triangulations is interesting but more difficult.

\section{Symmetry classes}
The action of the dihedral group of order $2n$ on the triangulations of an $n$-gon by rotation and reflection\footnote{To this end, we consider $P$ as a regular polygon.}
defines an equivalence relation, whose classes are called {\em symmetry classes}.
Figure~\ref{hexs} shows representatives of all three symmetry classes of triangulations of a hexagon.
Moon and Moser \cite{MM} enumerated the symmetry classes of triangulations of an $n$-gon.
Clearly, rotations and reflections
preserve the number of ears, so that the equivalence relation may be restricted to triangulations with a fixed number of ears.

\begin{figure}[h]
\begin{center}
\includegraphics[scale=0.6]{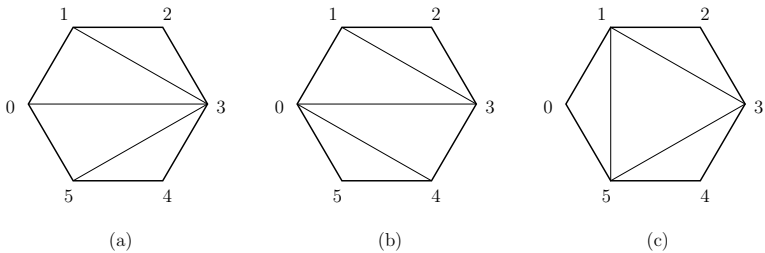}
\end{center}
\caption{Triangulations of a hexagon.}\label{hexs}
\end{figure}


To enumerate the symmetry classes of $2$-eared triangulations, we note their connection with integer compositions.
The two ears of a $2$-eared triangulation of an $n$-gon
partition the remaining $n-4$ sides of the $n$-gon
into two paths.
Considering a drawing in which the ears are separated by a vertical line, we can call these paths ``up'' and ``down''.
See Figure~\ref{conj-Fig}: the ``up path'' is $123456$, and the ``down path'' is $89(10)$.
Each of the remaining $n-4$ triangles of the triangulation ``points'' either up or down.
For example, in the triangulation in Figure~\ref{conj-Fig},
the triangles with vertices $2, 9, 10$ and $6, 8, 9$ are the ones pointing up.
It is easily seen that every configuration of triangles pointing up and down is possible,
and that together with the position of the ears this configuration completely determines the $2$-eared triangulation.
The connection with integer compositions is apparent if we associate with each composition of $n-3$ a sequence of bars placed in the $n-4$ spaces between $n-3$ balls.
Then each bar corresponds to an upwards-pointing triangle and each vacant space between balls corresponds to a downwards-pointing triangles.
Every symmetry class can be represented in this way four times (some of the representations might coincide),
see Figure~\ref{conj-Fig}.
The operation of interchanging the bars and the vacant spaces is known as {\em conjugation} of compositions,
first studied by MacMahon~\cite{Ma}.
In terms of triangulations, conjugation corresponds to interchanging the up and down directions,
and composition reversal (i.e., taking $x_1+x_2+ \ldots +x_n$ to $x_n+x_{n-1}+\ldots +x_1$) corresponds to reflection about a vertical axis.
This leads to the following conclusion.

\begin{prop}\label{bij}
There is a bijection between symmetry classes of $2$-eared triangulations of an $n$-gon and equivalence classes of compositions of $n-3$,
where two compositions are considered equivalent if one can be obtained from the other by conjugation and reversal.
\end{prop}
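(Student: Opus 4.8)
The plan is to route the correspondence through an intermediate object, the \emph{oriented snake}: a $2$-eared triangulation detached from its embedding in the labelled polygon, but equipped with a choice of which of its two ears comes first and which of the two boundary paths is called ``up''. First I would make precise the encoding described in the preceding paragraph. Because a $2$-eared triangulation has no internal triangles, its dual graph is a tree with exactly two leaves and all remaining nodes of degree two, i.e.\ a path $t_0, t_1, \dots, t_{n-3}$ whose endpoints $t_0, t_{n-3}$ are the ears. Reading the $n-4$ non-ear triangles $t_1, \dots, t_{n-4}$ in order and recording for each whether it points up or down yields a word of length $n-4$ in the two symbols ``up'' and ``down''; via the balls-and-bars description this is exactly a composition of $n-3$. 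Since an oriented snake is completely determined by this word, and conversely every word of length $n-4$ is realized, the resulting map $\Phi$ from oriented snakes to compositions of $n-3$ is a bijection.

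Next I would pin down the group actions and check that $\Phi$ is equivariant. On oriented snakes the Klein four-group $G' \cong \mathbb{Z}_2 \times \mathbb{Z}_2$ acts by (i) swapping the two ears, which reverses the order in which the triangles $t_1, \dots, t_{n-4}$ are read, and (ii) interchanging the designations ``up'' and ``down'', which toggles every letter of the word. Under $\Phi$ these become reversal and conjugation of compositions respectively, as recorded in the paragraph above, so $\Phi$ descends to a bijection between $G'$-orbits of oriented snakes and $G$-equivalence classes of compositions of $n-3$, where $G = \langle \text{conjugation}, \text{reversal}\rangle$.

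It then remains to identify the $G'$-orbits of oriented snakes with symmetry classes of $2$-eared triangulations, that is, to show that two $2$-eared triangulations lie in the same $D_n$-orbit precisely when they determine the same unoriented snake. One direction is immediate: a rotation or reflection of the regular $n$-gon preserves the triangulation structure and the property of a triangle being an ear, hence carries the snake of one triangulation to a congruent snake of the other, forgetting orientation. For the converse I would argue that all embeddings of a fixed unoriented snake into the labelled $n$-gon form a single $D_n$-orbit: fixing the unoriented snake leaves exactly the freedom of where to place the first ear (absorbed by a rotation) together with the two reflective choices in $G'$ (realized by reflections in $D_n$).

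The crux, and the step I expect to demand the most care, is exactly this last identification: one must check that the dihedral group of order $2n$ collapses, on the combinatorial data, to the four operations of $G$. The reconciliation is that every rotation acts trivially on the underlying shape --- it merely repositions the snake and so leaves the standard-frame composition unchanged --- while the reflections contribute precisely conjugation and reversal; the gap between $2n$ symmetries and four compositional operations is accounted for entirely by the choice of where the first ear sits. I would finally note that the ``coincidences'' mentioned after Figure~\ref{conj-Fig} cause no trouble: when a composition is self-conjugate or palindromic its $G$-orbit has fewer than four elements and the corresponding triangulation carries a nontrivial symmetry, but this affects only orbit sizes, not the asserted bijection on equivalence classes.
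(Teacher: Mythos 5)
Your proposal is correct and follows essentially the same route as the paper: encoding the up/down pointing pattern of the $n-4$ non-ear triangles as a composition of $n-3$ via balls and bars, with conjugation corresponding to the up/down swap and reversal to reflection. The paper leaves the final step (that the full dihedral action collapses to the four-element group on compositions, with rotations absorbed by the placement of the ears) implicit in the remark that every symmetry class is represented four times, whereas you spell it out carefully --- a worthwhile elaboration, but not a different argument.
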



Now we are able to enumerate the symmetry classes for $2$-eared triangulations.

\begin{theorem}
There are $2^{n-6}+2^{\lfloor n/2 \rfloor-3 }$ symmetry classes of $2$-eared triangulations of an $n$-gon.
\begin{proof}
By Proposition \ref{bij} it suffices to enumerate the conjugacy-and-reversal classes of compositions.
We do this by counting the compositions invariant under each relevant operation.
In all, there are $2^{m-1}$ compositions of an integer $m$. By results of MacMahon \cite{Ma,Mu},
there are $2^{\lfloor m/2 \rfloor}$ compositions of $m$ which are equal to their own reversal,
and if $m$ is odd, there are $2^{\lfloor m/2 \rfloor}$ compositions of $m$ whose conjugate equals their reversal.
If $m$ is even, the conjugate of a composition of $m$ cannot be equal to its reversal.
Finally, for $m>1$ a composition cannot be self-conjugate.

\begin{figure}[H]
\begin{center}
\includegraphics[scale=0.55]{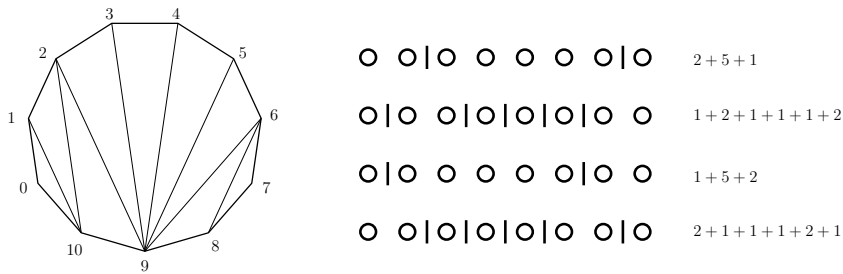}
\end{center}
\caption{A $2$-eared
 triangulation of a $11$-gon and corresponding compositions of $8$.}\label{conj-Fig}
\end{figure}

Thus by the Cauchy-Frobenius Lemma~\cite{B}
(applied to the group of order $4$ generated by the reversal and conjugation operations), the number of equivalence classes of compositions of $m>1$ is
\[
\frac{1}{4} (2^{m-1}+2^{\lfloor m/2 \rfloor}+{\mathbbm 1}_{2\nmid  m} \, 2^{\lfloor m/2 \rfloor})=2^{m-3}+2^{\lfloor {(m-3)/2 \rfloor}},
\]
where the indicator ${\mathbbm 1}_{2\nmid  m}$ is $1$ if $m$ is odd and $0$ if $m$  even. Setting $m=n-3$ then gives the stated result.
\end{proof}
\end{theorem}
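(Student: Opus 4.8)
The plan is to invoke Proposition~\ref{bij} to replace the geometric problem with a purely combinatorial one and then to apply the Cauchy--Frobenius (Burnside) Lemma to the group $G$ of order $4$ generated by the reversal operation $r$ and the conjugation operation $c$. Since $r$ and $c$ are commuting involutions, $G=\{e,r,c,rc\}$ is a Klein four-group, so the number of symmetry classes equals $\tfrac14\sum_{g\in G}|\mathrm{Fix}(g)|$, where $\mathrm{Fix}(g)$ denotes the set of compositions of $m:=n-3$ fixed by $g$. Thus everything reduces to computing four fixed-point counts, after which I would simplify and substitute $m=n-3$.

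The key device I would use to make all four counts transparent is the standard correspondence between compositions of $m$ and subsets $S\subseteq\{1,\dots,m-1\}$ recording the positions of the bars among the $m-1$ gaps. Under this identification, conjugation $c$ becomes complementation $S\mapsto\{1,\dots,m-1\}\setminus S$, and reversal $r$ becomes the reflection $S\mapsto\sigma(S)$ with $\sigma(s)=m-s$. First, $|\mathrm{Fix}(e)|=2^{m-1}$ is just the number of subsets. Second, $\mathrm{Fix}(r)$ consists of the $\sigma$-symmetric subsets; as $\sigma$ acts on $\{1,\dots,m-1\}$ with $\lfloor(m-1)/2\rfloor$ transposed pairs plus a single fixed point exactly when $m$ is even, a direct count yields $|\mathrm{Fix}(r)|=2^{\lfloor m/2\rfloor}$ in both parities. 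Third, $c$ has no fixed point for $m>1$, since a subset of the nonempty set $\{1,\dots,m-1\}$ cannot equal its own complement, giving $|\mathrm{Fix}(c)|=0$.

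The one count requiring care --- and the place where the parity of $m$ genuinely enters --- is $\mathrm{Fix}(rc)$. A subset is fixed by $rc$ precisely when $\sigma(S)=\{1,\dots,m-1\}\setminus S$, i.e.\ when exactly one of $s$ and $m-s$ lies in $S$ for every $s$. When $m$ is odd, $\sigma$ is fixed-point-free and splits $\{1,\dots,m-1\}$ into $(m-1)/2$ orbits of size two, so there are $2^{(m-1)/2}=2^{\lfloor m/2\rfloor}$ such subsets. When $m$ is even, however, $\sigma$ fixes $s=m/2$, and the condition would force $m/2$ into both $S$ and its complement, an impossibility, so $|\mathrm{Fix}(rc)|=0$. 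This is the crux of the argument and the source of the indicator ${\mathbbm 1}_{2\nmid m}$ in the Burnside sum; I expect the bookkeeping here, rather than the group structure or the other three counts, to be the main obstacle.

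Finally I would assemble the counts into
\[
\frac{1}{4}\Bigl(2^{m-1}+2^{\lfloor m/2\rfloor}+0+{\mathbbm 1}_{2\nmid m}\,2^{\lfloor m/2\rfloor}\Bigr),
\]
treat the cases $m$ even and $m$ odd separately to verify that this collapses uniformly to $2^{m-3}+2^{\lfloor(m-3)/2\rfloor}$, and then substitute $m=n-3$, using $\lfloor(n-6)/2\rfloor=\lfloor n/2\rfloor-3$, to arrive at $2^{n-6}+2^{\lfloor n/2\rfloor-3}$. The only subtlety in this last step is confirming that the single closed form is correct for both parities, which the two-case check settles immediately.
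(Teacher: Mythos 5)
Your proof is correct and follows essentially the same route as the paper: reduce via Proposition~\ref{bij} and apply the Cauchy--Frobenius Lemma to the order-$4$ group generated by reversal and conjugation, with exactly the same four fixed-point counts and the same final simplification. The only difference is cosmetic --- you verify the fixed-point counts directly through the subset-of-gaps encoding where the paper cites MacMahon, and your identification of conjugation with complementation is sound (and even if one adopts the convention where conjugation corresponds to complementation composed with reflection, this merely swaps the roles of $c$ and $rc$ and leaves the Burnside sum unchanged).
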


The sequence enumerating these symmetry classes is given in 
The On-Line Encyclopedia of Integer Sequences~\cite[A005418]{oeis}.
An analysis along the same lines can be performed to enumerate the symmetry classes of triangulations with $k$ ears.
For example, the number of symmetry classes of $3$-eared triangulations of an $n$-gon can be determined
by counting those that are invariant under rotation by $120^{\circ}$ and those that are invariant under reflection.
The result is that there are
\[
\frac{1}{3}  2^{n-8} (n-4)(n-5) + {\mathbbm 1}_{2 | n} 2^{n/2-4}+{\mathbbm 1}_{3 |  n} \frac{1}{3} 2^{n/3-2}\\
\]
symmetry classes of $3$-eared triangulations. Similar computations for larger $k$ are possible but become rather tedious.

\section{Disjoint triangulations}
\subsection{The two-eared case}

We call two triangulations {\em disjoint} if they have no diagonals in common.
Huguet and Tamari~\cite{HT} gave a complicated recursive formula for the total number of ``proper diagonals'' of the associahedron, or equivalently, the total number of pairs of disjoint triangulations of an $n$-gon. 
Such disjoint pairs are also related to questions about distances in the associahedron,
since, by a lemma of Sleator, Tarjan and Thurston \cite[Lemma 3]{STT},
minimal paths on the associahedron always avoid them when possible.
Given a triangulation, it may be difficult to determine how many triangulations are disjoint from it.
However, for $2$-eared triangulations the answer is simple.
\begin{theorem} \label{same}
If $T$ is any $2$-eared triangulation of an $n$-gon,
there are $C_{n-3}$ triangulations disjoint from $T$.
\begin{proof}
Consider the $2$-eared triangulation with diagonals $13, 14, \ldots, 1(n-1)$
(see the left side of Figure \ref{subsets-Fig}).
This is known as an {\em arrow} triangulation.
It is easily seen that a triangulation $T'$ is disjoint from this arrow triangulation if and only if
$T'$ includes the diagonal $02$.
Therefore there are $C_{n-3}$ triangulations disjoint from this arrow triangulation.

Now let $T_1$ and $T_2$ be two $2$-eared triangulations,
and we show that
the number of triangulations disjoint from $T_1$
is equal to
the number of triangulations disjoint from $T_2$.
Note that both $T_1$ and $T_2$ have linear dual trees, so we can define an order $<$ on their
diagonals.
 This induces a bijection  $\Phi$ between the diagonals of $T_1$ and of $T_2$. For example, for the triangulations of Figure  \ref{subsets-Fig}, we may assign
$\Phi(13)=68$, $\Phi(14)=69$, $\Phi(15)=59$,
and so on. If $e_1 <e_2< \ldots < e_r$  are some
diagonals
of $T_1$, we claim that the number of triangulations which include the
diagonals
$e_1,\ldots, e_r$ is the same as the number of triangulations that include the
diagonals
$\Phi(e_1),\ldots , \Phi(e_r)$. Indeed, since the
diagonals
$e_i$ and the
diagonals
$\Phi(e_i)$ have the same relative positions in their respective triangulations, they partition their $n$-gon into the same sized polygons. The number of triangulations including at least these diagonals is then just the number of ways to triangulate these polygons, which is the same number for $T_1$ and $T_2$.
For example, for each of the triangulations given in Figure \ref{subsets-Fig}, there are $C_2 C_2 C_4 C_1$ triangulations that include the marked diagonals.
Thus, the number of triangulations sharing each subset of the diagonals of $T_1$ is the same as the number of triangulations sharing the corresponding subset of $T_2$.
In particular this is true for the empty set,
and the theorem follows from considering any $2$-eared triangulation and an arrow triangulation.
\end{proof}
\end{theorem}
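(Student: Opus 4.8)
The plan is to reduce the claim to two sub-tasks: first, to show that the number of triangulations disjoint from $T$ is the \emph{same} for every $2$-eared triangulation $T$ of the $n$-gon, and second, to evaluate this common number on one convenient example. For the second task I would use the fan (``arrow'') triangulation with diagonals $13, 14, \dots, 1(n-1)$: a triangulation $T'$ is disjoint from it precisely when $T'$ uses no diagonal emanating from vertex $1$, which forces vertex $1$ to be an ear and hence forces $T'$ to contain the diagonal $02$. Cutting off the triangle $012$ leaves an $(n-1)$-gon on the vertices $0,2,3,\dots,n-1$, whose $C_{n-3}$ triangulations are exactly the triangulations disjoint from the arrow. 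So the common value, once invariance is established, is $C_{n-3}$.

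For the invariance I would exploit the fact that a $2$-eared triangulation has no internal triangle, so its dual tree is a path. This lets me list the $n-3$ diagonals in their path order $e_1 < e_2 < \dots < e_{n-3}$ and the $n-2$ triangles as $t_0, t_1, \dots, t_{n-3}$ with $e_j$ separating $t_{j-1}$ from $t_j$; the two ears are $t_0$ and $t_{n-3}$, and each interior triangle sits between two consecutive diagonals. The engine of the argument is inclusion--exclusion: writing $N(S)$ for the number of triangulations of the $n$-gon that contain every diagonal of a subset $S \subseteq \{e_1,\dots,e_{n-3}\}$, the number of triangulations disjoint from $T$ equals $\sum_{S}(-1)^{|S|} N(S)$. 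It therefore suffices to show that each $N(S)$ depends only on the \emph{positions} of the chosen diagonals and not on the particular $2$-eared triangulation.

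To see this, I would observe that a subset $S = \{e_{i_1}, \dots, e_{i_r}\}$ with $i_1 < \dots < i_r$ cuts $P$ into $r+1$ sub-polygons, the $a$-th of which is triangulated in $T$ by a block of consecutive triangles and hence is an $(m_a+2)$-gon, where $m_a$ is the number of triangles in that block. Because consecutive diagonals always bound a single triangle, these block sizes are exactly $i_1,\ i_2-i_1,\ \dots,\ i_r-i_{r-1},\ n-2-i_r$, which involve only the indices $i_1,\dots,i_r$. Since a triangulation containing all of $S$ is obtained by triangulating each sub-polygon independently, we get
\[
N(S) = C_{i_1}\, C_{i_2-i_1}\cdots C_{i_r-i_{r-1}}\, C_{n-2-i_r},
\]
a quantity manifestly independent of $T$. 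The order-preserving bijection $\Phi$ between the diagonals of two $2$-eared triangulations therefore preserves $N$ on corresponding subsets, so the alternating sums agree and the count of disjoint triangulations is the same for all $T$.

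The step I expect to require the most care is the structural claim underpinning the block-size formula: that the dual tree really is a path and that, consequently, each subset of diagonals splits $P$ into pieces whose sizes are fixed by the index pattern alone. Verifying that ``consecutive diagonals bound exactly one triangle'' and that the extreme blocks are the two ears is what makes $N(S)$ a pure function of positions; once that is pinned down, the inclusion--exclusion bookkeeping and the arrow computation are routine, and combining them yields the value $C_{n-3}$.
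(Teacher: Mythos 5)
Your proposal is correct and follows essentially the same route as the paper: compute the count for the arrow triangulation via the forced diagonal $02$, then prove invariance across all $2$-eared triangulations by using the path-order bijection on diagonals and the fact that corresponding subsets cut the polygon into same-sized pieces. The only difference is cosmetic: you spell out the inclusion--exclusion sum and the explicit Catalan product $N(S)=C_{i_1}C_{i_2-i_1}\cdots C_{n-2-i_r}$, which the paper leaves implicit in the proof and records separately in the remark following Theorem~\ref{same}.
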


\begin{figure}[h]
\begin{center}
\includegraphics[scale=0.6]{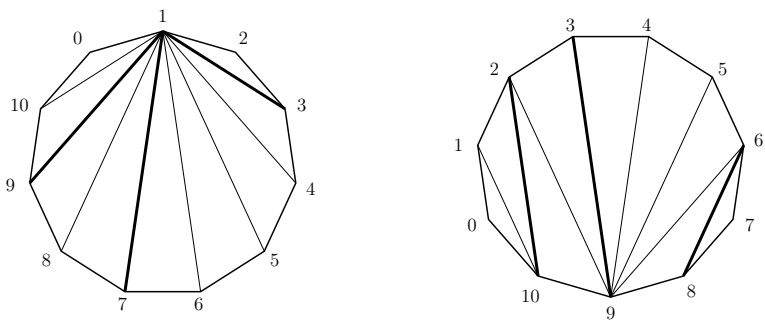}
\end{center}
\caption{Two $2$-eared
 triangulations with corresponding subsets of diagonals.}\label{subsets-Fig}
\end{figure}


Next we make some remarks related to Theorem~\ref{same} and its proof.

\begin{enumerate}
\item The proof of Theorem~\ref{same} can be also presented as follows.
The number of triangulations of an $n$-gon \textit{containing} certain fixed diagonals
is the product of the numbers of triangulations of sub-polygons created by these diagonals,
that is -- the product of corresponding Catalan numbers.
Therefore, the number $\mathrm{disj}(T)$ of triangulations \textit{avoiding} all the diagonals of a given triangulation $T$
is given by an inclusion-exclusion formula.
If $T$ is a $2$-eared triangulation of an $n$-gon,
then, due to the linear ordering of its diagonals,
the terms in this formula will be products of Catalan numbers
$C_{a_1}C_{a_2}\dots C_{a_i}$, where $(a_1, a_2, \dots, a_i)$ are all possible compositions of $n-2$.
Namely,
\begin{equation}\label{eq:cat}
\mathrm{disj}(T) = \sum_{i=1}^{n-2} \left( (-1)^{i+1} \prod_{
\begin{array}{c}
(a_1, a_2, \dots, a_i): \\
a_1, a_2, \dots, a_i \in \mathbb{N}, \\
a_1+ a_2+ \dots+ a_i = n-2
\end{array}
} C_{a_i}
\right).
\end{equation}


This already proves that for given $n$, $\mathrm{disj}(T)$ does not depend on a specific $T$.
In order to find analytically $\mathrm{disj}(T)$ 
we notice that
$\mathrm{disj}(T)$, as given in~\eqref{eq:cat},
is the coefficient of $x^{n-3}$ in
\[\sum_{i\geq 0} (-1)^{i} x^i s^{i+1}(x),\]
where $c(x)$ is the generating function of Catalan numbers, and $s(x)=\frac{c(x)-1}{x} = c^2(x)$.
The sum of this geometric series is
\[
\frac{s(x)}{1+xs(x)} = \frac{c(x)-1}{xc(x) } = c(x),
\]
and, therefore, $\mathrm{disj}(T)$
is indeed equal to $C_{n-3}$.

\item
We note another interpretation of the result of Theorem \ref{same}.
The second author \cite{R1} showed that the number of triangulations of a $2n$-gon
without any diagonals parallel to one of its sides (for example, to $01$),
is equal to 
$2C_{2n-3}$.
Similarly, consider the triangulations of an $n$-gon which avoid diagonals parallel to either $01$ or $02$.
These are precisely the triangulations that are disjoint from the $2$-eared
 triangulation with
 diagonals
 $02, 2(n-1), (n-1)3, 3(n-2), \ldots ,(\lfloor n/2 \rfloor+2)(\lfloor n/2 \rfloor)$,
 known as a {\em snake} triangulation (see Figure~\ref{Fig1}).
Thus we obtain the following corollary.
\begin{corollary}
For any $n\ge 3$, the number of triangulations of an $n$-gon avoiding any diagonals parallel to $01$ or $02$, is equal to $C_{n-3}$.
\end{corollary}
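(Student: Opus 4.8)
The plan is to reduce the statement to Theorem~\ref{same}. The snake triangulation is $2$-eared (its dual tree is a path, with ears $012$ and the triangle at the opposite end of the zigzag), so once we know that the triangulations avoiding all diagonals parallel to $01$ or $02$ are \emph{exactly} the triangulations disjoint from the snake, Theorem~\ref{same} gives the count $C_{n-3}$ immediately. The whole of the work therefore lies in establishing that identification, and the crux of it is the following claim: the set of all diagonals of the regular $n$-gon parallel to $01$ or to $02$ coincides with the set of $n-3$ diagonals of the snake.

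First I would record the parallelism criterion. Placing the vertices at the $n$-th roots of unity, the chord $ab$ is perpendicular to the bisecting radius pointing in direction $\tfrac{\pi}{n}(a+b)$, so two chords $ab$ and $cd$ are parallel precisely when $a+b\equiv c+d\pmod n$. Consequently the diagonals parallel to the side $01$ are exactly those with vertex-sum $\equiv 1\pmod n$, and the diagonals parallel to $02$ are exactly those with vertex-sum $\equiv 2\pmod n$. This translates the geometric condition ``avoid all diagonals parallel to $01$ or $02$'' into the purely combinatorial condition ``use no diagonal whose vertex-sum is $\equiv 1$ or $\equiv 2\pmod n$''.

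Next I would verify the two inclusions for the claim. The easy direction is that every snake diagonal lies in one of the two classes: reading off $02,\,2(n-1),\,(n-1)3,\,3(n-2),\dots$, the vertex-sums are $2$, then $n+1\equiv 1$, then $n+2\equiv 2$, and so on, so each snake diagonal has sum $\equiv 1$ or $\equiv 2\pmod n$. For the reverse inclusion I would count the two classes directly: the genuine diagonals with sum $\equiv 1$ are the pairs $(a,n+1-a)$, and those with sum $\equiv 2$ are $02$ together with the pairs $(a,n+2-a)$. A short computation — carried out separately for $n$ even and $n$ odd, since the candidate pairs that are actually \emph{sides} differ by parity — shows that the two classes together contain exactly $n-3$ diagonals in either case. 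As the snake already contributes $n-3$ distinct diagonals that sit inside this set, the easy inclusion forces the set to coincide with the snake, proving the claim.

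With the identification in hand the corollary follows at once: a triangulation avoids every diagonal parallel to $01$ or $02$ iff it shares no diagonal with the snake, i.e.\ iff it is disjoint from the snake, and since the snake is $2$-eared, Theorem~\ref{same} yields exactly $C_{n-3}$ such triangulations. The main obstacle is the reverse inclusion — ruling out any diagonal parallel to $01$ or $02$ beyond those of the snake. I would handle it through the counting equality above rather than by listing the diagonals one by one, since matching the exact sizes of the two classes against the $n-3$ snake diagonals is cleaner than the case analysis a direct enumeration would require; the only delicacy is the parity bookkeeping needed to discard the sides from each class.
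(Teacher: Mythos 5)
Your proposal is correct and takes essentially the paper's own route: the paper likewise identifies the diagonals parallel to $01$ or $02$ with the $n-3$ diagonals of the snake triangulation and then invokes Theorem~\ref{same} to get the count $C_{n-3}$. The only difference is one of detail, not of method: the paper asserts the identification without proof, while you verify it via the vertex-sum criterion ($ab \parallel cd$ iff $a+b \equiv c+d \pmod{n}$) and a parity-careful count showing both classes together contain exactly $n-3$ diagonals, all already present in the snake — a sound elaboration of the step the paper leaves implicit.
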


\begin{figure}[h]
\begin{center}
\includegraphics[scale=0.25]{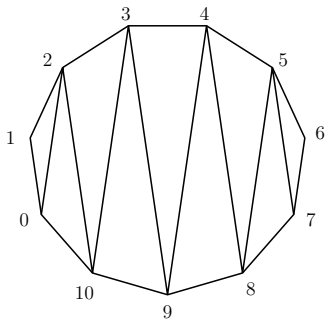}
\end{center}
\caption{A snake triangulation of an $11$-gon.}\label{Fig1}
\end{figure}

\item
The idea of the proof of Theorem~\ref{same} can be extended to proving the following more general result.
\begin{theorem}\label{same-gen}
The number of triangulations disjoint from a triangulation $T$ of an $n$-gon depends only on the position of the internal triangles of $T$.
\end{theorem}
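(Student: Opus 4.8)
The plan is to lift the inclusion–exclusion computation of the first remark following Theorem~\ref{same} to an arbitrary triangulation. For any triangulation $T$ of the $n$-gon, I would work with its dual tree $\mathcal{T}$, which has one vertex for each of the $n-2$ triangles of $T$ and one edge for each of the $n-3$ diagonals; the vertices of $\mathcal{T}$ of degree $3$ are exactly the internal triangles of $T$, and the leaves are the ears. Identifying the diagonals $D(T)$ with the edges of $\mathcal{T}$, I would write, exactly as in \eqref{eq:cat},
\[
\mathrm{disj}(T) = \sum_{S \subseteq D(T)} (-1)^{|S|} N(S),
\]
where $N(S)$ is the number of triangulations of the $n$-gon that contain every diagonal in $S$.

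The key step is to evaluate $N(S)$ intrinsically in $\mathcal{T}$. A triangulation containing all diagonals of $S$ is obtained by triangulating, independently, each of the sub-polygons into which $S$ cuts the $n$-gon, so $N(S)$ is the product of the triangulation counts of these sub-polygons. Translating to the dual tree, deleting the edges of $S$ splits $\mathcal{T}$ into connected components, and a component carrying $t$ vertices corresponds to a sub-polygon with $t+2$ sides, hence to $C_t$ triangulations. This gives
\[
N(S) = \prod_{j} C_{t_j(S)},
\]
where the $t_j(S)$ are the sizes of the connected components of $\mathcal{T}$ after the edges of $S$ are removed. As a check, $S=\emptyset$ leaves one component of size $n-2$ and $N(\emptyset)=C_{n-2}$, while $S=D(T)$ leaves $n-2$ singletons and $N(D(T))=C_1^{\,n-2}=1$, namely $T$ itself.

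Combining the two displays, $\mathrm{disj}(T)$ is exhibited as a function of the abstract tree $\mathcal{T}$ alone: both the range of summation and the component sizes $t_j(S)$ are intrinsic to $\mathcal{T}$. Finally, because $\mathcal{T}$ has maximum degree $3$, its isomorphism type is determined by its branch vertices — the internal triangles — together with the lengths of the paths joining them to one another and to the leaves; this is precisely the ``position of the internal triangles,'' which yields the theorem. (Specialized to $k=2$ the dual tree is a path, recovering the $n$-only dependence of Theorem~\ref{same}, and to $k=3$ it is a three-branch spider, recovering the dependence on branch lengths.) I expect the only delicate point to be the passage from the embedded dual tree to the abstract one: I must verify that neither $N(S)$ nor the final sum depends on the cyclic/planar data of the embedding, only on the underlying tree, so that triangulations sharing an internal-triangle configuration but routing their branches differently around the polygon genuinely produce the same count.
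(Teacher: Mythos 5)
Your proof is correct and follows essentially the same route as the paper: the paper likewise expands $\mathrm{disj}(T)$ by inclusion--exclusion over subsets of diagonals and observes that each term is a product of Catalan numbers determined by the sizes of the sub-polygons that the subset cuts out, so two triangulations with the same internal-triangle configuration produce identical sums. Your dual-tree packaging makes this intrinsic (and in fact exhibits the slightly stronger statement that $\mathrm{disj}(T)$ depends only on the isomorphism type of the dual tree), and the ``delicate point'' you flag at the end is already settled by your own formula, since the component sizes $t_j(S)$ depend only on the abstract tree and not on its planar embedding.
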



The proof is essentially the same:
consider two triangulations, $T_1$ and $T_2$, with their internal triangles in the same position. 
There is a natural bijection between the diagonals of $T_1$ and $T_2$
such that corresponding diagonals partition $P$ into the same size polygons.
Therefore, again,
the number of triangulations sharing a subset of the diagonals of $T_1$ is equal to
the number of triangulations sharing the corresponding subset of $T_2$;
and, in particular, this holds for the empty set.
In other words, computing the number of triangulations disjoint from $T_1$
and those disjoint from $T_2$ will produce the same inclusion-exclusion formula.

\end{enumerate}

\subsection{The three-eared and general case}

In what follows we apply the idea of Theorem \ref{same-gen} to the $3$-eared case. We start with the following lemma.

\begin{lemma}\label{thm:m_forb}
Let $a$ be a fixed vertex of $P$.
Let $m$ be a number such that $0 \leq m \leq  n-3$.
The number of triangulations of $P$
that do not use any of the $m$ diagonals 
$a(a+2), a(a+3), \dots, a(a+m+1)$, is
\begin{equation}\label{eq:m_forb}
\sum_{i=0}^{n-3-m} C_{i} C_{n-3-i}.
\end{equation}

\end{lemma}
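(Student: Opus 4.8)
The plan is to reduce the condition ``$T$ avoids the $m$ forbidden diagonals'' to a single numerical parameter of $T$, namely the apex of the triangle sitting on the polygon edge $a(a+1)$. By rotational symmetry I may assume $a=0$, so that $P$ has vertices $0,1,\dots,n-1$ and the forbidden diagonals are $02,\,03,\,\dots,\,0(m+1)$ (there are indeed $m$ of them, and $m+1\le n-2$ since $m\le n-3$). In any triangulation $T$ the polygon edge $01$ lies in a unique triangle $01w$ with apex $w\in\{2,3,\dots,n-1\}$; write $w(T)$ for this apex.

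First I would establish the key equivalence: $T$ uses none of the forbidden diagonals if and only if $w(T)\ge m+2$. For the forward implication, note that the triangle $01w$ splits $P$ into itself together with the sub-polygon on $1,2,\dots,w$ and the sub-polygon on $0,w,w+1,\dots,n-1$; since vertex $0$ occurs only in that triangle and in the latter sub-polygon, when $w\ge m+2$ none of $2,\dots,m+1$ can ever be joined to $0$, so all forbidden diagonals are avoided. For the converse I argue contrapositively: if $w(T)=j$ with $2\le j\le m+1$, then, as $0$ and $j$ are non-adjacent (because $2\le j\le n-2$), the side $0j$ of the triangle $01j$ is a genuine diagonal, and it is forbidden.

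Next, for each admissible apex value $w$ I would count the triangulations with $w(T)=w$. Because the two sub-polygons created by $01w$ are triangulated independently, this count is the product of Catalan numbers $C_{w-2}\,C_{n-w-1}$: the first factor enumerates triangulations of the $w$-vertex polygon on $1,\dots,w$, and the second enumerates triangulations of the $(n-w+1)$-vertex polygon on $0,w,\dots,n-1$. Summing over the admissible range $w=m+2,\dots,n-1$ and substituting $i=w-2$ gives $\sum_{i=m}^{n-3}C_i\,C_{n-3-i}$. Finally, the reindexing $i\mapsto n-3-i$, which fixes the symmetric summand $C_i\,C_{n-3-i}$, turns this into $\sum_{i=0}^{n-3-m}C_i\,C_{n-3-i}$, matching~\eqref{eq:m_forb}. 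As a sanity check, $m=0$ recovers the full Catalan convolution $\sum_{i=0}^{n-3}C_i\,C_{n-3-i}=C_{n-2}$, the total number of triangulations.

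The main obstacle is the equivalence in the second paragraph: one must see that avoiding the entire family of short diagonals at $0$ is governed by the single ``first'' neighbour $w(T)$, rather than by a separate constraint for each diagonal. Once this reduction is secured, the remaining steps are a routine product-of-Catalans decomposition together with the symmetry of the Catalan convolution, and no delicate estimates are required.
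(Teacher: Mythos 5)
Your proof is correct and follows essentially the same route as the paper's: your apex $w(T)$ of the triangle on the edge $01$ is exactly the paper's maximal index $b$ with $1b$ a diagonal (the paper's triangle $01b$), and both arguments reduce the avoidance condition to the bound $w\ge m+2$ and then sum the products $C_{w-2}C_{n-w-1}$ of Catalan numbers over the admissible apexes. The only cosmetic differences are that you handle $m=0$ uniformly where the paper treats it separately, and that you make the equivalence ``avoids all forbidden diagonals iff $w(T)\ge m+2$'' slightly more explicit.
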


\begin{proof}
If $m=0$, there is no restriction, and we have $C_{n-2}$ triangulations, which is equal to
$\displaystyle\sum_{i=0}^{n-3} C_{i} C_{n-3-i}$.

Now suppose $1 \leq m \leq  n-3$.
We assume without loss of generality that $a=0$, refer to Figure~\ref{fig:m_forb}.
A triangulation $T'$ that satisfies the condition cannot use the diagonal $02$.
Therefore, $T'$ has at least one diagonal with an endpoint in $1$.
Let $b$, $3 \leq b \leq n-1$, be the \textbf{maximal} number so that $1b$ is a diagonal in $T'$.
Then $T'$ necessarily has the diagonal $0 b$ (unless $b=n-1$).
Therefore we have $b > m+1$ because otherwise $0 b$ is forbidden. 
\begin{figure}[h]
$$\includegraphics[width=45mm]{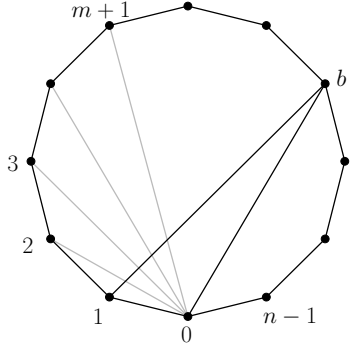}$$
\caption{Illustration to proof of Lemma~\ref{thm:m_forb}. The forbidden diagonals are shown by grey color.}
\label{fig:m_forb}
\end{figure}

For any choice of triangulation of the $b$-gon with vertices  $1, 2, \dots, b$
and of the $(n-b+1)$-gon with vertices  $0, b, b+1, \dots, n-1$,
a valid triangulation of $X_n$ is obtained.
Since $m+2 \leq b \leq n-1$,
the number of triangulations $T'$ that satisfy the condition is
\[
C_{m}C_{n-m-3}+C_{m+1}C_{n-m-4}+ \dots +C_{n-3}C_{0}=
\sum_{i=0}^{n-3-m} C_{i} C_{n-3-i}.\]
\end{proof}


Let $T$ be a $3$-eared triangulation of an $n$-gon.
We say that $T$ has type $(p, q, r)$
if $p, q, r$ are the numbers of triangles in three parts of $T$ separated by its (unique) internal triangle.
In other words, the dual graph of such $T$ consists of three branches, of lengths $p$, $q$ and $r$,
connected to the common point.
Notice that $p+q+r=n-3$.
The $3$-eared triangulation in Figure~\ref{hexs}(c) has type $(1,1,1)$.

Now we prove the main result of this section.

\begin{prop}\label{thm:pqr}
Let $T$ be a $3$-eared triangulation
of type $(p,q,r)$.
The number of triangulations of $X_n$
disjoint from $T$ is
\begin{equation}\label{eq:pqr}
2C_{n-3}- \sum_{i=0}^p C_i C_{n-4-i}- \sum_{i=0}^q C_i C_{n-4-i}- \sum_{i=0}^r C_i C_{n-4-i}.
\end{equation}
\end{prop}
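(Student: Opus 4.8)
The plan is to combine the reduction principle of Theorem~\ref{same-gen} with the inclusion–exclusion device of \eqref{eq:cat} and with Lemma~\ref{thm:m_forb}. First I would use Theorem~\ref{same-gen} to replace $T$ by a convenient representative of its type: let the internal triangle have vertices $u,v,w$ cutting $\partial P$ into three arcs carrying $p$, $q$, $r$ triangles, and triangulate each of the three regions as a fan based at the corresponding vertex of the internal triangle. Then the diagonals of $T$ are the three sides $c_1=uv$, $c_2=vw$, $c_3=wu$ of the internal triangle, together with, in each region, a block of consecutive fan diagonals emanating from one of its endpoints; a triangulation $T'$ is disjoint from $T$ precisely when it avoids all of these.

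Next I would run inclusion–exclusion over the diagonals of $T$, exactly as in \eqref{eq:cat}. The dual tree of $T$ is a tripod: a central node (the internal triangle) carrying three linear branches of lengths $p$, $q$, $r$. Hence, once a subset $S$ of the diagonals is prescribed, the resulting sub-polygons decouple along the three branches, and the product of Catalan numbers factors into three independent arm-contributions glued at the central triangle. Each arm is linear, so summing its signed contributions is the geometric-series computation carried out after Theorem~\ref{same}; the surviving partial sums have the form $\sum_{i=0}^{k} C_i C_{\,\cdot-i}$, which are exactly the truncated Catalan convolutions produced by Lemma~\ref{thm:m_forb} (the number of triangulations of an $N$-gon avoiding $m$ consecutive fan diagonals is such a sum). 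This is where Lemma~\ref{thm:m_forb} enters, evaluating each arm in closed form, and it is also the step that accounts for the truncated sums appearing in \eqref{eq:pqr}.

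Finally I would reassemble the three arm-contributions around the central triangle and simplify, using $c(x)$ together with $s(x)=\frac{c(x)-1}{x}=c^{2}(x)$ and the identity $\frac{s}{1+xs}=c$ from the discussion after Theorem~\ref{same}. After the per-arm geometric series collapse, the claim is that the only surviving terms are the leading $2C_{n-3}$ and the three truncated convolutions of \eqref{eq:pqr}. I expect the main obstacle to be precisely this bookkeeping: unlike the linear $2$-eared case, the branching at the internal triangle makes the inclusion–exclusion over the tripod produce many mixed products of Catalan numbers spread across all three arms, and the crux is to show that every such cross-term cancels, leaving the clean four-term expression. Checking the cancellation on the smallest instance, type $(1,1,1)$ on the hexagon (where there are no arm diagonals and the sum reduces to $C_{n-2}-3\,C_1C_{n-3}+\cdots$), is a useful guide before pushing the general convolution identity through.
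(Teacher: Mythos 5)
Your argument stands or falls on the claim that the signed inclusion--exclusion sum over subsets of the tripod's diagonals ``decouples along the three branches'' into independent arm contributions glued at the central triangle. That factorization fails as stated: whenever the chosen subset $S$ omits one or more sides of the internal triangle, the sub-polygon containing the centre merges portions of two or three arms, and its Catalan factor depends \emph{jointly} on the innermost selected diagonal of each of those arms. The signed sum is therefore a genuine triple convolution coupled at the centre, not a product of three one-variable geometric series, and the identity $\frac{s(x)}{1+xs(x)}=c(x)$ that collapses the linear ($2$-eared) case cannot be applied arm by arm. You acknowledge this yourself --- you call the cross-term cancellation ``the crux'' --- but you never carry it out, so what you have is a strategy whose hardest step is missing, not a proof. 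The paper sidesteps inclusion--exclusion entirely: after the same reduction via Theorem~\ref{same-gen} to a standard representative, it counts the disjoint triangulations $T'$ directly in two cases. If $1(n-1)\in T'$, the count is exactly Lemma~\ref{thm:m_forb} applied to the $(n-1)$-gon $1,2,\dots,n-1$ with $m=q$ forbidden fan diagonals, giving $\sum_{i=0}^{n-4-q}C_iC_{n-4-i}$; if $1(n-1)\notin T'$, conditioning on the maximal $d$ with $0d\in T'$ reduces the inner $(d+1)$-gon to the already-solved $2$-eared disjointness problem (Theorem~\ref{same}), contributing $C_{d-2}C_{n-d-2}$ for each $d$ with $p+2\le d\le p+q+1$. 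No cancellation of cross-terms is ever needed; Lemma~\ref{thm:m_forb} enters as an actual counting step, not merely as the shape of the answer.

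Your instinct to test type $(1,1,1)$ on the hexagon is good --- carry it out, because it is informative about the target formula itself. A direct count gives $4$ triangulations disjoint from the central triangulation (the three fans at the odd vertices and the opposite central triangulation), and the paper's two case-counts give $3+1=4$. Rewriting their sum via the convolution $\sum_{i=0}^{n-4}C_iC_{n-4-i}=C_{n-3}$ yields $2C_{n-3}-\sum_{i=0}^{p-1}C_iC_{n-4-i}-\sum_{i=0}^{q-1}C_iC_{n-4-i}-\sum_{i=0}^{r-1}C_iC_{n-4-i}$, whereas \eqref{eq:pqr} as printed (upper limits $p,q,r$) evaluates to $1$ in this instance. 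So even if your tripod computation were completed, you should aim at the version with upper limits $p-1,q-1,r-1$; reproducing the printed \eqref{eq:pqr} verbatim would itself be a warning sign.
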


\begin{proof}
By 
Theorem \ref{same-gen} it suffices to prove the result for
triangulation $T$ that appears in Figure~\ref{fig:pqr}.
\begin{figure}[h]
$$\includegraphics[width=60mm]{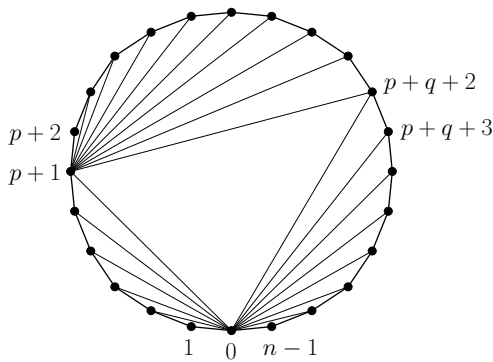}$$
\caption{The representative for triangulations of type $(p, q, r)$.}
\label{fig:pqr}
\end{figure}

Let $T'$ be a triangulation disjoint from $T$.
We split the discussion into two cases, conditioning on whether or not $1 (n-1)$ belongs to $T'$.

\begin{itemize}
\item Case $1$: $1 (n-1)$ belongs to $T'$.

$T'$ is disjoint form $T$
if and only if
the 
triangulation of the $(n-1)$-gon with vertices  $1, 2, \dots, n-1$
avoids the diagonals
$(p+1)(p+3), (p+1)(p+4), \dots, (p+1)(p+q+2)$.
By Lemma~\ref{thm:m_forb}, the number of such triangulations is given by~\eqref{eq:m_forb}
(with $n-1$ in role of $n$, and $q$ in role of $m$),
that is,
\begin{equation}\label{eq:pqr1}
\sum_{i=0}^{n-4-q} C_{i} C_{n-4-i}.
\end{equation}

\item Case $2$: $1(n-1)$ does not belong to $T'$.

In such a case $T'$ has a diagonal one of whose endpoints is $0$.
Denote by $d$ the \textbf{maximal} number so that $0d$ is a diagonal in $T'$.
Obviously, $p+2 \leq d \leq p+q+1$.
Notice that $(n-1)d$ is also a diagonal in $T'$.
Now we have a situation shown in Figure~\ref{fig:pqr2}.
\begin{figure}[h]
$$\includegraphics[width=60mm]{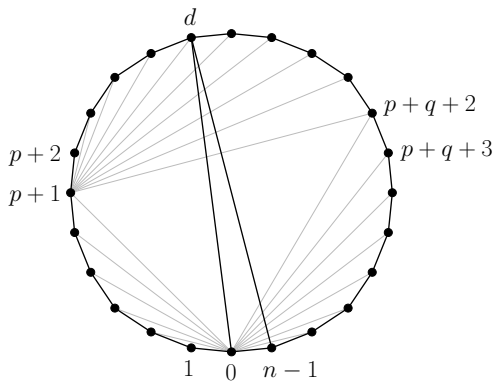}$$
\caption{Illustration to proof of Proposition~\ref{thm:pqr}, case $2$.}
\label{fig:pqr2}
\end{figure}

$T'$ is disjoint from $T$ if and only if
the 
triangulation ($T_1$) of the $(d+1)$-gon with vertices $0, 1, \dots, d$
avoids diagonals $02, 03, \dots, 0(p+1)$
and $(p+1)(p+3), (p+1)(p+4), \dots, (p+1)d$.
There are no restrictions on the 
triangulation ($T_2$) of the $(n-d)$-gon with vertices $d, d+1, \dots, n-1$.
For the the triangulation $T_1$, the condition is equivalent to being disjoint from
a triangulation with two ears -- $0 1 2$ and $(p+1)(p+2)(p+3)$.
Therefore, by Theorem~\ref{same}, the number of such triangulations $T_1$ is
$C_{d-2}$.
The number of triangulations $T_2$ is
$C_{n-d-2}$.
Since $p+2 \leq d \leq p+q+1$,
the number of triangulations $T'$ disjoint from $T$
is
\begin{equation}\label{eq:pqr2}
C_{p}C_{n-p-4} + C_{p+1}C_{n-p-3} + \dots + C_{p+q-1}C_{n-p-q-3}=
\sum_{j=p}^{p+q-1}C_{j}C_{n-4-j}.
\end{equation}
\end{itemize}
Summing up~\eqref{eq:pqr1} and~\eqref{eq:pqr2}, we obtain
\begin{equation*}
\sum_{i=0}^{n-4-q} C_{i} C_{n-4-i} \ + \ \sum_{j=p}^{p+q-1}C_{j}C_{n-4-j}.
\end{equation*}
This can be shown to be equivalent to~\eqref{eq:pqr},
by using the standard convolution formula $\sum_{i=0}^{n-4} C_{i} C_{n-4-i}=C_{n-3} $.
\end{proof}


\textit{Remark.} The result of Theorem~\ref{same}
can be seen as a special case of Theorem~\ref{thm:pqr}
with $r=0$, $q=n-3-p$.

\section{Concluding remarks}

It would be interesting to find a formula for the number of triangulations disjoint from a fixed $k$-eared triangulation in terms of the position of its internal triangles.
As above, one can take a convenient triangulation with the given inner triangles.
Such results may be used to obtain another formula for the total number of pairs of disjoint triangulations of an $n$-gon.


\begin{thebibliography}{99}
\bibitem{AFR} R. Adin, M. Firer, and Y. Roichman, Triangle-free triangulations, Adv. in Appl. Math. 45:1 (2010), 77--95.

\bibitem{B} W. Burnside, Theory of Groups of Finite Order,
2nd ed., Cambridge University Press, Cambridge, 1911. p. 191.

\bibitem{C} D.~Callan, A variant of Touchard's Catalan number identity. \\
\href{http://arxiv.org/abs/1204.5704v2}{http://arxiv.org/abs/1204.5704v2}.

\bibitem{HN} F. Hurtado and M. Noy, Ears of triangulations and Catalan numbers, Discrete Math. 149 (1996), 319--324.

\bibitem{HT} D. Huguet and D. Tamari,  La structure poly\`{e}drale des complexes de parenth\'{e}sages,
J.~Combin. Inform. System Sci. 3:2 (1978), 69--81.

\bibitem{Ma}
	P. A. MacMahon, {\em Combinatory Analysis}, vols. I and II,
	Cambridge University Press, 1915, 1916 (reprinted Chelsea 1960).

\bibitem{MM} J. W. Moon and L. Moser, Triangular dissections of $n$-gons, Canad. Math. Bull., 6 (1963), 175--178.


\bibitem{Mu} A. O. Munagi, Primary classes of compositions of numbers, Ann. Math. Inform. 41 (2013), 193--204.

\bibitem{R1} A. Regev, Enumerating triangulations by parallel diagonals, J. Integer Seq. 15 (2012).


\bibitem{oeis}
\textit{The On-Line Encyclopedia of Integer Sequences},
\href{http://oeis.org}{http://oeis.org}.

\bibitem{STT} D. Sleator, R. Tarjan, W. Thurston, Rotation distance, triangulations, and hyperbolic geometry,
J. Amer. Math. Soc., Vol.~1, No.~3., 1988.

\end{thebibliography}
\end{document}